\documentclass[12pt]{amsart}
\usepackage{amsmath,amssymb,amsbsy,amsfonts,amsthm,latexsym,mathabx,
            amsopn,amstext,amsxtra,euscript,amscd,stmaryrd,mathrsfs,
            cite,array,mathtools,enumerate}

\usepackage{url}
\usepackage[colorlinks,linkcolor=blue,anchorcolor=blue,citecolor=blue,backref=page]{hyperref}

\usepackage[norefs,nocites]{refcheck}
\usepackage{color}
\usepackage{float}

\hypersetup{breaklinks=true}

\usepackage[english]{babel}

\usepackage{mathtools}
\usepackage{todonotes}

\usepackage{enumitem}

\usepackage{mathtools}
\usepackage{todonotes}
\usepackage{url}
\usepackage[colorlinks,linkcolor=blue,anchorcolor=blue,citecolor=blue,backref=page]{hyperref}

\begin{document}

\newtheorem{theorem}{Theorem}
\newtheorem{lemma}[theorem]{Lemma}
\newtheorem{claim}[theorem]{Claim}
\newtheorem{cor}[theorem]{Corollary}
\newtheorem{prop}[theorem]{Proposition}
\newtheorem{definition}{Definition}
\newtheorem{question}[theorem]{Open Question}
\newtheorem{example}[theorem]{Example}
\newtheorem{remark}[theorem]{Remark}

\numberwithin{equation}{section}
\numberwithin{theorem}{section}

 \newcommand{\F}{\mathbb{F}}
\newcommand{\K}{\mathbb{K}}
\newcommand{\D}[1]{D\(#1\)}
\def\scr{\scriptstyle}
\def\\{\cr}
\def\({\left(}
\def\){\right)}
\def\<{\langle}
\def\>{\rangle}
\def\fl#1{\left\lfloor#1\right\rfloor}
\def\rf#1{\left\lceil#1\right\rceil}
\def\le{\leqslant}
\def\ge{\geqslant}
\def\eps{\varepsilon}
\def\mand{\qquad\mbox{and}\qquad}

\def\vec#1{\mathbf{#1}}
\def\ve {\vec{e}}
\def\vh {\vec{h}}
\def\vG {\vec{G}}
\def\vQ {\vec{Q}}

\newcommand{\discr}{\operatorname{discr}}
\newcommand{\wdeg}{\operatorname{wdeg}}

\newcommand{\lcm}{\operatorname{lcm}}

\def\bl#1{\begin{color}{blue}#1\end{color}} 

\newcommand{\C}{\mathbb{C}}
\newcommand{\Fq}{\mathbb{F}_q}
\newcommand{\Fp}{\mathbb{F}_p}
\newcommand{\Disc}[1]{\mathrm{Disc}\(#1\)}
\newcommand{\Res}[1]{\mathrm{Res}\(#1\)}
\newcommand{\ord}{\mathrm{ord}}

\newcommand{\Q}{\mathbb{Q}}
\renewcommand{\L}{\mathbb{L}}
\renewcommand{\L}{\mathbb{L}}
\renewcommand{\P}{\mathbb{P}}

\newcommand{\Norm}{\mathrm{Norm}}

\def\cA{{\mathcal A}}
\def\cB{{\mathcal B}}
\def\cC{{\mathcal C}}
\def\cD{{\mathcal D}}
\def\cE{{\mathcal E}}
\def\cF{{\mathcal F}}
\def\cG{{\mathcal G}}
\def\cH{{\mathcal H}}
\def\cI{{\mathcal I}}
\def\cJ{{\mathcal J}}
\def\cK{{\mathcal K}}
\def\cL{{\mathcal L}}
\def\cM{{\mathcal M}}
\def\cN{{\mathcal N}}
\def\cO{{\mathcal O}}
\def\cP{{\mathcal P}}
\def\cQ{{\mathcal Q}}
\def\cR{{\mathcal R}}
\def\cS{{\mathcal S}}
\def\cT{{\mathcal T}}
\def\cU{{\mathcal U}}
\def\cV{{\mathcal V}}
\def\cW{{\mathcal W}}
\def\cX{{\mathcal X}}
\def\cY{{\mathcal Y}}
\def\cZ{{\mathcal Z}}

\def\fra{{\mathfrak a}} 
\def\frb{{\mathfrak b}}
\def\frc{{\mathfrak c}}
\def\frd{{\mathfrak d}}
\def\fre{{\mathfrak e}}
\def\frf{{\mathfrak f}}
\def\frg{{\mathfrak g}}
\def\frh{{\mathfrak h}}
\def\fri{{\mathfrak i}}
\def\frj{{\mathfrak j}}
\def\frk{{\mathfrak k}}
\def\frl{{\mathfrak l}}
\def\frm{{\mathfrak m}}
\def\frn{{\mathfrak n}}
\def\fro{{\mathfrak o}}
\def\frp{{\mathfrak p}}
\def\frq{{\mathfrak q}}
\def\frr{{\mathfrak r}}
\def\frs{{\mathfrak s}}
\def\frt{{\mathfrak t}}
\def\fru{{\mathfrak u}}
\def\frv{{\mathfrak v}}
\def\frw{{\mathfrak w}}
\def\frx{{\mathfrak x}}
\def\fry{{\mathfrak y}}
\def\frz{{\mathfrak z}}

\def\ov#1{\overline{#1}}
\def \brho{\boldsymbol{\rho}}

\def \fB {\mathfrak B}
\def \fG {\mathfrak G}
\def \fP {\mathfrak P}

\def \Prob{{\mathrm {}}}
\def\e{\mathbf{e}}
\def\ep{{\mathbf{\,e}}_p}
\def\epp{{\mathbf{\,e}}_{p^2}}
\def\em{{\mathbf{\,e}}_m}

\newcommand{\sR}{\ensuremath{\mathscr{R}}}
\newcommand{\sDI}{\ensuremath{\mathscr{DI}}}
\newcommand{\DI}{\ensuremath{\mathrm{DI}}}

\newcommand{\Orb}[1]{\mathrm{Orb}\(#1\)}
\newcommand{\aOrb}[1]{\overline{\mathrm{Orb}}\(#1\)}
\def \PrePer{{\mathrm{PrePer}}}
\def \Per{{\mathrm{Per}}}

\def \Nm{{\mathrm{Nm}}}
\def \Gal{{\mathrm{Gal}}}

\newenvironment{notation}[0]{%
  \begin{list}%
    {}%
    {\setlength{\itemindent}{0pt}
     \setlength{\labelwidth}{1\parindent}
     \setlength{\labelsep}{\parindent}
     \setlength{\leftmargin}{2\parindent}
     \setlength{\itemsep}{0pt}
     }%
   }%
  {\end{list}}

\definecolor{dgreen}{rgb}{0.,0.6,0.}
\def\tgreen#1{\begin{color}{dgreen}{\it{#1}}\end{color}}
\def\tblue#1{\begin{color}{blue}{\it{#1}}\end{color}}
\def\tred#1{\begin{color}{red}#1\end{color}}
\def\tmagenta#1{\begin{color}{magenta}{\it{#1}}\end{color}}
\def\tNavyBlue#1{\begin{color}{NavyBlue}{\it{#1}}\end{color}}
\def\tMaroon#1{\begin{color}{Maroon}{\it{#1}}\end{color}}

\title[Irreducible polynomials over thin subgroups]{Counting Irreducible polynomials with coefficients from thin subgroups}

\author[A. Ostafe] {Alina Ostafe}
\address{School of Mathematics and Statistics, University of New South Wales, Sydney NSW 2052, Australia}
\email{alina.ostafe@unsw.edu.au}

 \author[I.~E.~Shparlinski]{Igor E. Shparlinski}
 \address{I.E.S.: School of Mathematics and Statistics, University of New South Wales.
 Sydney, NSW 2052, Australia}
 \email{igor.shparlinski@unsw.edu.au}
 
 \begin{abstract} L.~Bary-Soroker and R.~Shmueli (2026) have given an asymptotic formula for the number 
 of irreducible polynomials over the finite fields $\F_q$ of $q$ elements, such that their coefficients 
 are perfect squares in $\F_q$ and also extended this to classes of polynomials with coefficients 
 described by finitely many unions of intersections of polynomial images. Here we use a different approach, 
 which allows us to obtain another generalisation of this result to polynomials with coefficients from small
 subgroups of $\F_q^*$. As a demonstration of the power of our approach, we also use it to count such irreducible 
 polynomials with an additional condition, namely, with a prescribed value  of their discriminant. This generalisation 
 seems to be unachievable via the approach of L.~Bary-Soroker and R.~Shmueli (2026). \end{abstract}

\pagenumbering{arabic}

\keywords{Irreducible polynomials, finite fields, thin subgroups, discriminants, character sums} 
\subjclass[2020]{11T06, 11T24} 

\maketitle

\tableofcontents

\section{Introduction}
\subsection{Motivation}
\label{sec: mot} 

Let $\F_q$ be a finite field of $q$ elements, and let 
$\vG = (\cG_0,    \ldots, \cG_{n-1})$ 
be a sequence of $n$  arbitrary multiplicative subgroups of $\F_q^*$, and let 
$\vh = (h_0,  \ldots, h_{n-1})\in \(\F_q^*\)^n$ 
be a sequence of $n$ arbitrary elements of $\F_q^*$. 
We view  $\vG$ as a subgroup of $\(\F_q^*\)^n$ and thus write $\vG \le \(\F_q^*\)^n$ 
to indicate this. 

We denote by $\cI_n(\vh, \vG)$ the set of monic irreducible polynomials $f\in \F_q[X]$
with coefficients from co-sets $h_i\cG_i$, $i = 0, \ldots, n$, that is, 
of the form
\[
f(X) = X^n + a_{n-1} X^{n-1} + \ldots + a_0, \qquad a_i \in h_i\cG_i, \ i = 0, \ldots, n-1.
\]

When $q$ is odd, $\vG =\vQ =\( \cQ, \ldots,  \cQ \)$, where $\cQ = \{u^2:~u \in \F_q^*\}$  is the subgroup
of squares, and $\vh = {\mathbf 1} = (1,\ldots,1)$ is the identity vector, Bary-Soroker and Shmueli~\cite[Theorem~1]{B-SShm} 
have established the asymptotic formula 
\begin{equation}
\label{eq:square coeffs}
\# \cI_n({\mathbf 1} , \vQ) = \frac{q^n}{n2^n} + O_n\(q^{n-1/2}\), 
\end{equation}
where $O_n$ indicates that the implied constant depends only on $n$. 

In fact, the asymptotic formula~\eqref{eq:square coeffs}
is derived from a more general result~\cite[Theorem~2]{B-SShm}  on counting irreducible polynomials 
with coefficients from sets defined as finite unions and intersections of polynomial images. 
In particular, if $\vG \le  \(\F_q^*\)^n$ is of bounded index
\[
\left[ \(\F_q^*\)^n: \vG\right] \le M 
\]
for some $M>0$,  then 
\begin{equation}
\label{eq:subgr coeffs}
\# \cI_n(\vh, \vG) = \frac{1}{n} \prod_{j=0}^{n-1} \# \cG_i+ O_{n,M}\(q^{n-1/2}\), 
\end{equation}
where the implied constant in $O_{n,M}$ depends only on $n$ and $M$. 

\subsection{Our results}
\label{sec: res} 
The approach of Bary-Soroker and Shmueli~\cite{B-SShm} relies on a version of the 
{\it Chebotarev Density Theorem\/},  and thus leads to constants, which,  while effective, are sometimes
not easy to explicitly estimate, and these estimates are rather poor (especially those depending on 
the parameter $M$ in~\eqref{eq:subgr coeffs}). 

Here we use a different approach, which allows us to get a nontrivial asymptotic formula 
for subgroups of  index which grows as a power of $q$. Furthermore, to show the power of 
our approach, we refine the question and count polynomials from $ \cI_n(\vh, \vG)$ 
with a given discriminant. Thus, for $d \in \F_q^*$, we define the set
\[
\cI_{d,n} (\vh, \vG)= \{f \in  \cI_n(\vh, \vG):~\discr f = d\},
 \]
 where $\discr f$ denotes the discriminant of the polynomial $f$. 
 However our method requires some mild restriction on the characteristic of $\F_q$. 
 
Let  $\cI_{d,n} =  \cI_{d,n} \({\mathbf 1}, (\F_q^*)^n\)$ be the set of all monic irreducible  polynomials 
of degree $n$ over $\F_q$ with non-zero coefficients and the discriminant $d$, where we set ${\mathbf 1}=(1,\ldots,1)\in \F_q^n$.

 \begin{theorem}
\label{thm:IdhG} Let $\F_q$ be of characteristic $p > n\ge 2$. 
For $d \in  \F_q^*$, $\vh  \in \F_q^*$ and $\vG \le  \(\F_q^*\)^n$ we have 
\begin{align*}
\left| \#\cI_{d,n} (\vh, \vG) -  \frac{\# \cI_{d,n}}{(q-1)^n} \prod_{j=0}^{n-1} \# \cG_i \right| \le &
 \frac{n(n-1)}{2}  \# \cI_{d,n}  q^{-1/2} \\
 &\qquad\quad+  \frac{n^2(n^2-1)(3n+2)}{24} q^{n-2}.
\end{align*}
\end{theorem}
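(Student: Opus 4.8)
The plan is to detect each of the $n$ coefficient constraints by multiplicative characters and so reduce the whole statement to a single uniform estimate for a character sum over $\cI_{d,n}$. For each $i$ I would write
\[
\mathbf 1\left[a_i\in h_i\cG_i\right]=\frac{\#\cG_i}{q-1}\sum_{\chi_i}\chi_i(a_i)\,\overline{\chi_i(h_i)},
\]
where $\chi_i$ runs over the $(q-1)/\#\cG_i$ multiplicative characters of $\F_q^*$ that are trivial on $\cG_i$ (with the convention $\chi_i(0)=0$). Multiplying these indicators over $i=0,\dots,n-1$ and summing over the monic irreducible $f$ of degree $n$ with $\discr f=d$ gives
\[
\#\cI_{d,n}(\vh,\vG)=\frac{1}{(q-1)^n}\left(\prod_{i=0}^{n-1}\#\cG_i\right)\sum_{\chi_0,\dots,\chi_{n-1}}\left(\prod_{i=0}^{n-1}\overline{\chi_i(h_i)}\right)S(\chi_0,\dots,\chi_{n-1}),
\]
where $S(\chi_0,\dots,\chi_{n-1})=\sum_f\prod_{i=0}^{n-1}\chi_i(a_i)$. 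The all-trivial tuple contributes exactly $S(\mathbf 1)=\#\cI_{d,n}$, which is precisely the claimed main term $\frac{\#\cI_{d,n}}{(q-1)^n}\prod_{i}\#\cG_i$.

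Next I would dispose of the combinatorics of the remaining tuples. The total number of tuples equals $(q-1)^n/\prod_i\#\cG_i$, so the normalising factor $\prod_i\#\cG_i/(q-1)^n$ multiplied by the number of nontrivial tuples is $1-\prod_i\#\cG_i/(q-1)^n<1$. Hence the theorem follows once I show that for \emph{every} nontrivial tuple
\[
\left|S(\chi_0,\dots,\chi_{n-1})\right|\le\binom{n}{2}\#\cI_{d,n}\,q^{-1/2}+\frac{n^2(n^2-1)(3n+2)}{24}\,q^{n-2}.
\]
This uniform bound is the heart of the argument, and everything above is bookkeeping that isolates it.

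To prove it I would view $S(\chi_0,\dots,\chi_{n-1})$ as a multiplicative character sum living on the fibre $\{\discr f=d\}$ in the coefficient space. Since $d\ne0$ the condition $\discr f=d$ forces $f$ to be separable, so I may detect it through multiplicative characters $\lambda$ of $\F_q^*$, writing $\mathbf 1[\discr f=d]=\frac1{q-1}\sum_\lambda\lambda(\discr f)\overline{\lambda(d)}$, and reduce $S$ to an average over $\lambda$ of complete sums $\sum_f\lambda(\discr f)\prod_i\chi_i(a_i)$. The twisting function is governed by the discriminant, a polynomial of total degree $2(n-1)$ in the coefficients whose vanishing locus is the union of the $\binom n2$ walls where two roots coincide; this is the geometric source of the factor $\binom n2$ and of the square-root saving, which I would extract from a Weil-type bound applied uniformly in $\lambda$ and in the tuple. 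Writing $f=\prod_k(X-\alpha^{q^k})$ with $\alpha\in\F_{q^n}$, the extreme data are norms, $a_0=(-1)^n\Nm(\alpha)$ and $\discr f=(-1)^{n(n-1)/2}\Nm(f'(\alpha))$, which helps to name the resulting character on $\F_{q^n}^*$ and to confirm it is nontrivial. The lower-order terms of the explicit Weil-type estimate — the contribution of the degenerate strata and of the non-top cohomology — are what I expect to produce the secondary term of size $q^{n-2}$ with its polynomial-in-$n$ constant. Throughout, the hypothesis $p>n$ is what guarantees separability, tame ramification and the full (symmetric) monodromy, so that the nontrivial characters genuinely oscillate and no degeneracy destroys the cancellation.

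The main obstacle is exactly this Weil-type estimate with \emph{fully explicit} constants, uniform over all nontrivial tuples. The sharp difficulty is that only the two extreme coefficients $a_0$ and $a_{n-1}$ are, up to sign, a norm and a trace, whereas the intermediate $\chi_i(a_i)$ are multiplicative characters of genuine symmetric functions of the conjugates and cannot be folded into a single character of $\F_{q^n}^*$; one is therefore forced to work with a multi-dimensional character sum in the coefficient space and to track numerical constants rather than mere orders of magnitude, both for the leading cancellation and for the degenerate loci. Verifying the nondegeneracy of the character on every stratum — equivalently, that its exponent data are not annihilated by the geometry of the discriminant — is the delicate point on which the clean constants $\binom n2$ and $\tfrac{n^2(n^2-1)(3n+2)}{24}$ ultimately rest.
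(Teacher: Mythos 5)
Your proposal sets up standard character-detection bookkeeping correctly: detecting each coset condition by the characters trivial on $\cG_i$, isolating the trivial tuple as the main term, and observing that a uniform bound on the nontrivial sums $S(\chi_0,\dots,\chi_{n-1})=\sum_{f\in\cI_{d,n}}\prod_i\chi_i(a_i)$ would suffice. But that uniform bound is precisely where your argument stops: you name it ``the heart of the argument,'' sketch a hoped-for route (detect $\discr f=d$ by characters $\lambda$, then apply a Weil--Deligne type estimate to $\sum_f\lambda(\discr f)\prod_i\chi_i(a_i)$ in coefficient space), and then list, accurately, the obstructions that prevent you from carrying it out: the intermediate coefficients cannot be folded into a character of $\F_{q^n}^*$, the sum is genuinely multi-dimensional, the irreducibility constraint on $f$ is not a character condition at all and is never detected in your framework, and the explicit constants $\binom n2$ and $\tfrac{n^2(n^2-1)(3n+2)}{24}$ have no identified source. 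So the proposal is a reduction of the theorem to an unproven (and, with those exact constants, quite possibly harder) statement, not a proof.

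The paper avoids all of these difficulties by a different mechanism, which is worth internalizing: instead of summing characters over all of $\cI_{d,n}$ at once, it partitions $\cI_{d,n}$ into equivalence classes under the shift $f(X)\mapsto f(X+u)$, which (for $p>n$) have exactly $q$ elements each and preserve both irreducibility and the discriminant. Inside one class, Taylor's formula $f(X+u)=\sum_i f^{(i)}(u)X^i/i!$ turns the $n$ coset conditions into conditions on the values of the fixed polynomials $f^{(i)}$ at the single variable $u$, so the nontrivial character sums become one-variable sums $\sum_{u\in\F_q}\prod_{i\in\cK}\psi_i\bigl(h_i^{-1}f^{(i)}(u)/i!\bigr)$, to which the classical Weil bound applies with the explicit constant $\tfrac{n(n-1)}2 q^{1/2}$ — provided the $f^{(i)}$ have no common or multiple roots (``good'' classes). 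The ``bad'' classes, where some resultant $\Res{f^{(i)},f^{(j)}}$ vanishes, are counted geometrically: the absolute irreducibility of the discriminant hypersurface $\cD_{d,n}$ forces $\cD_{d,n}\cap\cR_{i,j,n}$ to have dimension at most $n-2$, and the Schwartz--Zippel/Bukh--Tsimerman lemma together with Bezout gives at most $n(n-i)(n-j)q^{n-2}$ points per pair $(i,j)$; summing over pairs produces exactly the constant $\tfrac{n^2(n^2-1)(3n+2)}{24}$. In short, the irreducibility and discriminant constraints are never detected analytically — they are carried along for free by the shift invariance — and this is what makes the constants explicit and the whole argument elementary. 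If you want to salvage your route, you would need a substitute for Deligne-type estimates over the (non-complete) set of irreducible polynomials with prescribed discriminant, uniform in all the characters, which is essentially the obstacle the paper's construction was designed to bypass.
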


Summing over all $d \in \F_q^*$, we immediately 
obtain the following uniform version of~\eqref{eq:subgr coeffs}. 

 \begin{cor}
\label{cor:IhG} Let $\F_q$ be of characteristic $p > n\ge 2$. 
For  $\vh  \in \F_q^*$ and $\vG \le  \(\F_q^*\)^n$, we have 
\[
 \#\cI_n(\vh, \vG)=  \frac{1}{n} \prod_{j=0}^{n-1} \# \cG_i + O\(n^2q^{n-1/2}+n^5 q^{n-1}\),
\]
where the implied constant is absolute. 
\end{cor}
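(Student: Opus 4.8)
The plan is to prove Theorem~\ref{thm:IdhG} by detecting the coset conditions $a_i \in h_i \cG_i$ via multiplicative characters and then summing over all irreducible polynomials with a prescribed discriminant. First I would write each indicator function of the event $a_i \in h_i\cG_i$ as a normalized sum over multiplicative characters of $\F_q^*$ that are trivial on $\cG_i$. Concretely, for the subgroup $\cG_i \le \F_q^*$ and the element $h_i$, the fraction $\#\cG_i/(q-1)$ times the sum over characters $\chi_i$ trivial on $\cG_i$ of $\chi_i(a_i/h_i)$ equals the indicator that $a_i$ lies in $h_i\cG_i$. Substituting this into $\#\cI_{d,n}(\vh,\vG)$ and separating the contribution of the principal characters (all $\chi_i$ trivial) yields exactly the main term $\frac{\#\cI_{d,n}}{(q-1)^n}\prod_{j} \#\cG_i$. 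The error term is then a weighted sum, over all nontrivial choices of the character tuple $\brho = (\chi_0,\ldots,\chi_{n-1})$ with each $\chi_i$ trivial on $\cG_i$, of character sums of the shape
\[
S(\brho) = \sum_{\substack{f \in \cI_{d,n} \\ f = X^n + a_{n-1}X^{n-1}+\cdots+a_0}} \prod_{i=0}^{n-1} \chi_i(a_i).
\]

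Next I would interpret the coefficients $a_i$ of a monic irreducible $f$ with roots in $\F_{q^n}$ as (signed) elementary symmetric functions of the roots, equivalently as values on the Frobenius orbit of a single root $\alpha \in \F_{q^n}$. Running over irreducible $f$ of degree $n$ is, up to the factor $n$ from the Frobenius action, the same as running over $\alpha \in \F_{q^n}$ of degree exactly $n$; the coefficient $a_i$ becomes $(-1)^{n-i} e_{n-i}(\alpha)$, a polynomial (in fact a norm-type expression) in $\alpha$ and its conjugates. The product $\prod_i \chi_i(a_i)$ therefore becomes a single multiplicative character evaluated at a fixed rational function of $\alpha$, so that $S(\brho)$ is transformed into a multiplicative character sum over the points of the affine curve (or variety) cut out by the discriminant condition $\discr f = d$ inside the space of coefficient tuples. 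The discriminant being fixed to $d$ is what pins us down to a curve rather than the full affine space, and this is precisely the feature the authors advertise as inaccessible to the Chebotarev approach.

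I would then estimate $S(\brho)$ using the Weil bound for multiplicative character sums along an absolutely irreducible curve, which produces a saving of $q^{1/2}$ relative to the number of points, namely $\#\cI_{d,n}$. This accounts for the first error term $\frac{n(n-1)}{2}\,\#\cI_{d,n}\,q^{-1/2}$: the combinatorial prefactor $\binom{n}{2}$ should emerge as the number of relevant nontrivial character tuples weighted appropriately, or as a bound on the genus/degree data feeding into Weil. The second term $\frac{n^2(n^2-1)(3n+2)}{24}q^{n-2}$ I expect to arise as an error controlling the passage between counting irreducible $f$ and counting all $\alpha$ of degree dividing $n$ (inclusion–exclusion over subfields), together with the exceptional loci where the character-sum machinery degenerates, i.e. where the relevant map is not well-defined or the curve fails to be absolutely irreducible.

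The main obstacle will be establishing the absolute irreducibility of the curves defined by $\discr f = d$ together with the constraint that the map $f \mapsto \prod_i \chi_i(a_i)$ gives a genuinely nontrivial character sum, and controlling the geometric invariants (degree, genus, number of points at infinity and singular points) uniformly in $q$ so that the Weil bound yields exactly the stated polynomial-in-$n$ constants $\frac{n(n-1)}{2}$ and $\frac{n^2(n^2-1)(3n+2)}{24}$. In particular, verifying that the discriminant hypersurface intersected with the level sets of the symmetric functions remains irreducible, and that the characters $\chi_i$ do not conspire (through the multiplicative relations among the $e_{n-i}(\alpha)$) to trivialize the sum, requires care; the constant $p > n$ is presumably what guarantees the discriminant is separable and the elementary symmetric functions behave well, avoiding characteristic obstructions. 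For the Corollary, I would simply sum Theorem~\ref{thm:IdhG} over all $d \in \F_q^*$, using $\sum_{d} \#\cI_{d,n} \approx \#\{\text{monic irreducible of degree } n\} \approx q^n/n$ and $(q-1)^n \approx q^n$, so the main term becomes $\frac{1}{n}\prod_j \#\cG_i$ and the two error terms aggregate to $O(n^2 q^{n-1/2} + n^5 q^{n-1})$ after multiplying the per-$d$ bounds by roughly $q$ and simplifying the binomial coefficients.
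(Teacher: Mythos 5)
Your final paragraph --- the actual deduction of the Corollary --- is precisely the paper's own proof: one sums Theorem~\ref{thm:IdhG} over all $d\in\F_q^*$, notes that $p>n$ forces $\discr f\ne 0$ for every irreducible $f$ of degree $n$ (so each $f\in\cI_n(\vh,\vG)$ is counted exactly once), uses that $\sum_{d\in\F_q^*}\#\cI_{d,n}$ is the number of monic irreducibles of degree $n$ with nonzero coefficients, which is $q^n/n+O\(nq^{n-1}\)$, and that $q^n/(q-1)^n=1+O(n/q)$ since $n<q$. The first error term then aggregates to $O\(nq^{n-1/2}\)$ and the second, multiplied by the $q-1$ values of $d$, to $O\(n^5q^{n-1}\)$, so the bookkeeping works out and this step is correct and identical in approach to the paper.

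The issue is the bulk of your proposal, namely the sketched proof of Theorem~\ref{thm:IdhG} itself: it is not the paper's argument and would not go through as described. Two concrete obstacles. First, the locus $\discr f=d$ in coefficient space is a hypersurface of dimension $n-1$, not a curve (except when $n=2$), so the Weil bound for character sums along curves does not apply; obtaining square-root cancellation with explicit constants on a higher-dimensional variety is exactly the difficulty the paper's method is built to avoid. Second, the passage from coefficients to roots does not yield ``a single multiplicative character of a fixed rational function of $\alpha$'': a character $\chi_i$ of $\F_q^*$ evaluated at $a_i=\pm e_{n-i}\(\alpha,\alpha^q,\ldots,\alpha^{q^{n-1}}\)$ can be rewritten via the norm map only for $i=0$, where $a_0=\pm\Nm(\alpha)$; for the other coefficients, $e_{n-i}$ is a polynomial in $\alpha$ of degree of order $q^{n-1}$, so any direct Weil-type bound is vacuous. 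The paper instead partitions $\cI_{d,n}$ into classes under the shifts $f(X)\mapsto f(X+u)$ (which preserve irreducibility and the discriminant), detects the coset conditions by characters applied to the Taylor coefficients $f^{(i)}(u)/i!$, applies the one-variable Weil bound in $u$ for each ``good'' class, and bounds the number of ``bad'' classes (those with $\Res{f^{(i)},f^{(j)}}=0$ for some $i<j$) via the Schwartz--Zippel lemma and Bezout's theorem applied to $\cD_{d,n}\cap\cR_{i,j,n}$; this, rather than counting character tuples or inclusion--exclusion over subfields, is the source of the constants $\frac{n(n-1)}{2}$ and $\frac{n^2(n^2-1)(3n+2)}{24}$ and of the hypothesis $p>n$. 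Since the Corollary may legitimately cite Theorem~\ref{thm:IdhG}, your proof of the Corollary stands; as a self-contained argument including the Theorem, it has the gaps above.
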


In particular, for a fixed $n$,  the result  of   Corollary~\ref{cor:IhG} is nontrivial and actually gives an asymptotic 
formula with a power saving in the error term for subgroups $\vG$ of 
index
\[
\left[ \(\F_q^*\)^n: \vG\right] \le q^{1/2-\varepsilon} 
\]
with any fixed $\varepsilon > 0$.

While this is not the main focus of this paper, getting an asymptotic formula for $\# \cI_{d,n}$ is certainly a natural question, which 
we address in Appendix~\ref{app:Irr Poly Discr}. The celebrated Stickelberger theorem~\cite{St} (see also~\cite{Dalen,Skol}),  
suggests that 
$\#\cI_{d,n} = \(2/n+ o_n(1)\)  q^{n-1}$ for an odd $q$,  provided that $\chi_2(d) = (-1)^{n-1}$, 
where $\chi_2$ is the quadratic character of $\F_q$,  and $\#\cI_{d,n} = 0$, otherwise. Theorem~\ref{thm:Irr Discr} below 
confirms this.

\section{Algebraic geometry tools}

\subsection{Rational points on varieties}

The following statement, which is well-known for hypersurfaces as the {\it Schwartz--Zippel Lemma\/}
has been extended to varieties by Bukh and Tsimerman~\cite[Lemma~14]{BuTs}, 
and further generalised by Xu~\cite[Lemma~1.7]{Xu} as follows.

\begin{lemma}
\label{lem: Gen S-Z_Lem} Let $\cV \subseteq \overline \F_q^n $ be an affine  variety over 
 $\overline \F_q$   of co-dimension  $r$ 
and of degree $D$. Then 
\[
\sharp \(\cV\cap  \F_q^n\) \le D q^{n-r}. 
\]  
\end{lemma}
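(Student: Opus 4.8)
The plan is to prove the bound by induction on the ambient dimension $n$, after reducing to the irreducible case and then slicing by rational hyperplanes. First I would reduce to $\cV$ irreducible: writing $\cV = \cV_1\cup\ldots\cup\cV_s$ as the union of its irreducible components, one has $\#\(\cV\cap\F_q^n\)\le \sum_i \#\(\cV_i\cap\F_q^n\)$, each component $\cV_i$ has some codimension $r_i\ge r$, and the relevant notion of degree is the sum $D=\sum_i D_i$ of the component degrees $D_i=\deg\cV_i$. Since $q^{n-r_i}\le q^{n-r}$, a componentwise bound $\#\(\cV_i\cap\F_q^n\)\le D_i q^{n-r_i}$ would sum to $\le D q^{n-r}$, so it suffices to treat an irreducible $\cV$.

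For the base of the induction I would take $\dim\cV=0$, so that $r=n$: then $\cV$ is a single point of degree $D\ge 1$, and $\#\(\cV\cap\F_q^n\)\le 1\le D=Dq^{n-n}$. At the other extreme $\dim\cV=n$ forces $\cV=\overline\F_q^{\,n}$, with $r=0$ and $D=1$, and the count is exactly $q^n=Dq^{n-0}$. These settle the cases $r\in\{0,n\}$ and the starting point of the induction.

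For the inductive step I assume the statement in ambient dimension $n-1$ and take $\cV$ irreducible of codimension $r$ with $1\le r\le n-1$ and degree $D$ in $\overline\F_q^{\,n}$. I would partition the rational points according to the value of the last coordinate, $\cV\cap\F_q^n=\bigsqcup_{c\in\F_q}\(\cV\cap H_c\cap\F_q^n\)$ with $H_c=\{x_n=c\}$, identifying each rational hyperplane $H_c$ with $\overline\F_q^{\,n-1}$. If $x_n$ is constant on $\cV$, say $x_n\equiv c_0$, then either $c_0\notin\F_q$ and the intersection with $\F_q^n$ is empty, or $c_0\in\F_q$ and $\cV$ sits inside $H_{c_0}\cong\overline\F_q^{\,n-1}$ with codimension $r-1$ and the same degree $D$, so the induction hypothesis gives $\#\(\cV\cap\F_q^{n-1}\)\le Dq^{(n-1)-(r-1)}=Dq^{n-r}$. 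If instead $x_n$ is non-constant on $\cV$, then for each $c$ the slice $\cV_c:=\cV\cap H_c$ is a proper closed subvariety of dimension $\le n-r-1$, hence of codimension $\ge r$ in $\overline\F_q^{\,n-1}$; the induction hypothesis applied to each slice yields $\#\(\cV_c\cap\F_q^{n-1}\)\le D_c q^{n-1-r}$ with $D_c=\deg\cV_c$, so that $\#\(\cV\cap\F_q^n\)\le q^{n-1-r}\sum_{c\in\F_q}D_c$.

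The main obstacle, and the only genuinely geometric input, is controlling the total degree of the slices. Here I would invoke B\'ezout's theorem for hyperplane sections: a hyperplane has degree $1$ and does not contain the irreducible $\cV$, so the (reduced) degree of $\cV\cap H_c$ satisfies $D_c\le\deg\cV=D$ for every $c$. As $x_n$ takes $q$ rational values, this gives $\sum_{c\in\F_q}D_c\le qD$, whence $\#\(\cV\cap\F_q^n\)\le q^{n-1-r}\cdot qD=Dq^{n-r}$, closing the induction. The delicate points to check carefully are that a hyperplane section of an irreducible variety does not increase the total reduced degree (this is the B\'ezout bound, applied after passing to projective closures), and that the degree of a possibly non-equidimensional $\cV$ is indeed the sum of the degrees of its irreducible components, which is what makes the initial reduction to the irreducible case legitimate.
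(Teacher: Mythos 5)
Your proof is correct: the paper itself states this lemma without proof, quoting it from Bukh--Tsimerman~\cite[Lemma~14]{BuTs} and Xu~\cite[Lemma~1.7]{Xu}, and your induction on the ambient dimension via the slices $x_n=c$, with the refined B\'ezout bound $\deg(\cV\cap H_c)\le \deg \cV$ for an irreducible $\cV\not\subseteq H_c$ and the convention that the degree of a reducible (possibly non-equidimensional) variety is the sum of the degrees of all its irreducible components, is essentially the standard argument behind the cited result. The two points you flag --- that each affine component of $\cV\cap H_c$ closes up to a component of the projective intersection, so that B\'ezout applies after passing to projective closures, and that the summed-degree convention legitimises the reduction to the irreducible case --- are exactly the places where care is needed, and both check out.
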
 

\subsection{Degrees of intersections}

We now need a version of   Bezout's Theorem, see~\cite[Corollary~2.5]{EisHar:3264}.  

\begin{lemma}
\label{lem: Deg Intersect} Let $\cV_1, \cV_2 \subseteq   \P^n(\ov\F_q) $ be projective varieties over 
$\overline \F_q$  of  degrees $D_1$ and $D_2$, respectively. Then $\cV_1 \cap   \cV_2$ 
 is of degree at most $D_1D_2$. 
\end{lemma}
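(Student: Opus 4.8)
The plan is to prove this Bézout-type inequality by the classical \emph{join-and-diagonal} reduction, which converts an intersection taking place inside $\P^n$ into a single \emph{linear} section of an auxiliary variety whose degree is exactly the product $D_1D_2$. Throughout I take the degree of a (possibly reducible, possibly mixed-dimensional) projective variety to be the sum of the degrees of its irreducible components, the degree of an irreducible $k$-dimensional component being its intersection number with a general linear subspace of codimension $k$; this is the convention under which Lemma~\ref{lem: Gen S-Z_Lem} is stated, so working with it keeps the two lemmas compatible.

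First I would isolate the one elementary inequality on which everything rests: if $\cV \subseteq \P^N$ is any projective variety and $H \subseteq \P^N$ is a hyperplane, then $\deg(\cV \cap H) \le \deg \cV$. This is checked component by component. A component $W$ contained in $H$ survives unchanged and contributes $\deg W$ to both sides; a positive-dimensional component $W \not\subseteq H$ meets $H$ in a cycle of total degree $\deg W$, so its reduced section has degree at most $\deg W$; and a zero-dimensional component either persists with degree $1$ or disappears. Since every component of $\cV \cap H$ lies in some component of $\cV$ and overlaps only lower the component-degree sum, adding these up gives $\deg(\cV \cap H) \le \deg \cV$. Iterating over the defining hyperplanes of a linear subspace $\Lambda$ of codimension $c$ then yields $\deg(\cV \cap \Lambda) \le \deg \cV$ for \emph{every} such $\Lambda$, general or not.

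Next I would set up the join. Writing coordinates $(x_0:\dots:x_n:y_0:\dots:y_n)$ on $\P^{2n+1}$, I place $\cV_1$ in the disjoint copy $\{y=0\}$ and $\cV_2$ in $\{x=0\}$, and let $J$ be the union of all lines joining a point of $\cV_1$ to a point of $\cV_2$. The two standard facts I would invoke are the dimension and degree of a join, namely $\dim J = \dim\cV_1 + \dim\cV_2 + 1$ and, crucially, the multiplicativity $\deg J = \deg\cV_1 \cdot \deg\cV_2 = D_1D_2$. I then identify $\cV_1 \cap \cV_2$ with a linear section of $J$: a point $(\lambda a : \mu b)$ of a joining line lies on the diagonal $\Delta = \{x_i = y_i : 0 \le i \le n\}$, a linear subspace of codimension $n+1$, precisely when $a$ and $b$ are proportional, so $J \cap \Delta$ is exactly the diagonal copy of $\cV_1 \cap \cV_2$. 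Combining this with the second paragraph gives the claim in one line,
\[
\deg(\cV_1 \cap \cV_2) = \deg(J \cap \Delta) \le \deg J = D_1 D_2,
\]
the inequality being the linear-section bound applied to $\Lambda = \Delta$.

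The main obstacle is not the geometric bookkeeping but the two ingredients that make the argument clean. The first is the multiplicativity $\deg J = D_1D_2$ of the join degree, which requires knowing that the joining lines sweep out $J$ generically one-to-one and a Hilbert-series or general-linear-section computation to pin down the leading term. The second, and the more delicate point to argue carefully, is that intersecting with an \emph{arbitrary} linear subspace cannot raise the degree: here $\Delta$ is a very special subspace, so no "general position" shortcut is available, and one must lean on the component-by-component analysis of the second paragraph together with the observation that lower-dimensional pieces and multiplicities only ever help the inequality. With those two facts in hand the proof is immediate.
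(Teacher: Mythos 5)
Your argument is correct, but it is worth noting that the paper does not actually prove Lemma~\ref{lem: Deg Intersect} at all: it is quoted verbatim from~\cite[Corollary~2.5]{EisHar:3264}, so there is no internal proof to match. What you have written is, in effect, the standard proof lying behind that citation, namely Fulton's reduction to the diagonal: embed $\cV_1$ and $\cV_2$ in disjoint linear subspaces of $\P^{2n+1}$, form the join $J$ with $\deg J = D_1D_2$ and $\dim J = \dim \cV_1 + \dim \cV_2 + 1$, identify $\cV_1 \cap \cV_2$ with the linear section $J \cap \Delta$, and use the fact that cutting by an \emph{arbitrary} (not merely generic) hyperplane cannot increase the degree of a reduced variety. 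Your component-by-component verification of that last inequality is the genuinely delicate point and you handle it correctly; in particular you are right that for an irreducible $W \not\subseteq H$ of positive dimension every component of $W \cap H$ has dimension $\dim W - 1$ and the cycle $[W \cap H]$ has total degree $\deg W$, so passing to the reduced intersection only helps. Two small points deserve one more line each if this were written out in full. First, the multiplicativity $\deg J = D_1D_2$ for \emph{reducible} $\cV_1, \cV_2$ requires decomposing $J$ as the union of the joins $J(W_i, W'_j)$ over pairs of irreducible components and checking that no such join contains another (which follows since $J(W_i, W'_j) \cap \{y = 0\} = W_i$, so a containment of joins would force a containment of components); then the degrees add over pairs. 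Second, your proof uses the convention that the degree of a possibly reducible, mixed-dimensional variety is the sum of the degrees of all its irreducible components --- this is indeed the convention the paper needs, since Lemma~\ref{lem: Gen S-Z_Lem} is applied to the (a priori not necessarily irreducible or equidimensional) intersection $\cD_{d,n} \cap \cR_{i,j,n}$, and it is the form in which the refined B\'ezout inequality is stated in~\cite{EisHar:3264}. Your self-contained route buys independence from the reference at the cost of two standard but nontrivial facts about joins (irreducibility of the join of irreducible varieties, and the Hilbert-series computation of its degree); citing~\cite{EisHar:3264}, as the paper does, buys brevity.
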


\section{Discriminant variety}

\subsection{Irreducibility of the discriminant variety}

Let $\cD_{d,n}$ be the hypersurface of monic polynomials over $\overline \F_q$ (or more precisely 
of their coefficients) of degree $n$ and with a given discriminant $d \in \overline \F_q$, 
where $ \overline \F_q$ is the algebraic closure of $\F_q$. 

\begin{lemma}\label{lem:discr surf} 
 Let $\F_q$ be of odd characteristic. 
For any $d \in  \overline \F_q$  the hypersurface  $\cD_{d,n}$ is absolutely irreducible.  
\end{lemma}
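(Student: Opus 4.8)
The plan is to show that the discriminant, viewed as a polynomial in the coefficients $a_0,\ldots,a_{n-1}$, defines an absolutely irreducible hypersurface $\cD_{d,n}$ for each fixed $d\in\ov\F_q^*$. I would set $f(X)=X^n+a_{n-1}X^{n-1}+\ldots+a_0$ and recall that $\discr f$ is a polynomial $\Delta(a_0,\ldots,a_{n-1})\in\F_q[a_0,\ldots,a_{n-1}]$, so $\cD_{d,n}$ is the zero locus of $\Delta-d$. The key structural fact I would exploit is the relation between the discriminant and the values of $f$ at its roots: writing $f(X)=\prod_{i=1}^n(X-\alpha_i)$, one has $\discr f=\prod_{i<j}(\alpha_i-\alpha_j)^2$, and the coefficients $a_i$ are (up to sign) the elementary symmetric functions $e_k(\alpha_1,\ldots,\alpha_n)$. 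This gives a natural parametrisation of the space of monic degree-$n$ polynomials by $\A^n$ with coordinates $\alpha_1,\ldots,\alpha_n$, modulo the symmetric group $S_n$.

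First I would pass to the covering $\A^n\to\A^n$, $(\alpha_1,\ldots,\alpha_n)\mapsto(e_1,\ldots,e_n)$, which realises the target as the quotient $\A^n/S_n$ and is finite of degree $n!$. The preimage of $\cD_{d,n}$ under this map is the hypersurface $\widetilde\cD$ defined by $\prod_{i<j}(\alpha_i-\alpha_j)^2=d$ in the $\alpha$-coordinates. My strategy would be to first analyse $\widetilde\cD$ and then descend to $\cD_{d,n}$. Since $d\ne 0$, the polynomial $V=\prod_{i<j}(\alpha_i-\alpha_j)$ (the Vandermonde) is nonzero on $\widetilde\cD$, so on the relevant locus the equation $V^2=d$ splits into the two sheets $V=\pm\sqrt d$ (here I use odd characteristic, so $2$ is invertible and the square root exists in $\ov\F_q$). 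Each sheet is the graph-type hypersurface $\{V=c\}$ for a constant $c\ne 0$. The crucial point is that $V-c$ is an absolutely irreducible polynomial in $\alpha_1,\ldots,\alpha_n$: indeed $V$ is a nonconstant irreducible polynomial (it is a product of the linear forms $\alpha_i-\alpha_j$ but as a single polynomial it is squarefree and one checks irreducibility directly, e.g.\ $V$ is irreducible because it is linear of degree one in a suitable variable after a change of coordinates, or by a direct factorisation argument), and subtracting a nonzero constant from an irreducible polynomial whose zero set is geometrically connected preserves irreducibility.

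To transfer irreducibility from the sheets upstairs to $\cD_{d,n}$ downstairs, I would use the $S_n$-action: $S_n$ acts on $\widetilde\cD$ and the two sheets $\{V=+\sqrt d\}$ and $\{V=-\sqrt d\}$ are interchanged by any odd permutation (since $V$ is an alternating polynomial, so $\sigma\cdot V=\mathrm{sgn}(\sigma)V$). Hence $S_n$ acts transitively on the two irreducible components of $\widetilde\cD$, which means the image $\cD_{d,n}=\widetilde\cD/S_n$ is irreducible: a finite group quotient of a variety whose components are permuted transitively is irreducible, because the invariant-ring argument shows the coordinate ring has no nontrivial idempotents and the reduced quotient is a single component. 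More concretely, $\Delta-d$ pulls back to $V^2-d=(V-\sqrt d)(V+\sqrt d)$, and any factorisation of $\Delta-d$ over $\ov\F_q$ would pull back to an $S_n$-invariant factorisation of $(V-\sqrt d)(V+\sqrt d)$; since the only $S_n$-invariant factors are $1$ and the whole product, $\Delta-d$ is irreducible.

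I expect the main obstacle to be making the descent rigorous: establishing that absolute irreducibility of $\cD_{d,n}$ follows from the transitive permutation of the two sheets by $S_n$, rather than merely that $\cD_{d,n}$ is connected. The cleanest route is the pullback identity $\Delta-d=V^2-d$ in $\F_q[\alpha_1,\ldots,\alpha_n]^{S_n}$ combined with the fact that $\F_q[a_0,\ldots,a_{n-1}]\cong\F_q[\alpha_1,\ldots,\alpha_n]^{S_n}$ (the fundamental theorem of symmetric polynomials), which turns the question of factoring $\Delta-d$ in the symmetric subring into the question of which $S_n$-stable factorisations $V^2-d$ admits; the only nontrivial factors $V\mp\sqrt d$ are swapped by odd permutations, so neither is $S_n$-invariant, forcing $\Delta-d$ to be irreducible. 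The condition $p$ odd enters exactly to guarantee $\sqrt d$ exists and that $V^2-d$ is separable in the relevant sense, and I would flag the verification of irreducibility of the Vandermonde $V$ itself and the invariant-ring factorisation correspondence as the two steps requiring the most care.
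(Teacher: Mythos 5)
Your overall strategy---pulling $\discr f - d$ back to the root space via the fundamental theorem of symmetric polynomials, factoring the pullback as $(V-\sqrt d)(V+\sqrt d)$ with $V=\prod_{i<j}(\alpha_i-\alpha_j)$, and descending through the $S_n$-action---is sound and genuinely different from the paper's proof (which first settles $d=0$ by comparing a Lang--Weil point count with Carlitz's exact count $q^{n-1}$, and then transfers to $d\ne 0$ by a weighted-degree argument on top homogeneous parts). But there is a genuine gap at the crux of your argument: your claim that $V$ is irreducible is false for $n\ge 3$, since $V$ is by definition a product of $\binom{n}{2}$ distinct linear forms $\alpha_i-\alpha_j$. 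Consequently the irreducibility of the two sheets $V\mp\sqrt d$, on which the entire descent rests, is left unproven: the fallback principle ``subtracting a nonzero constant from an irreducible polynomial with connected zero set preserves irreducibility'' is not a valid implication in general, and in any case cannot be applied to the reducible polynomial $V$. The descent really does need irreducibility of the sheets: if $V-\sqrt d$ factored nontrivially, the product of the full $S_n$-orbit of one of its irreducible factors would be a proper $S_n$-invariant factor of $V^2-d$, and your ``only invariant factors are $1$ and the whole product'' step would collapse. What is true, and what you must actually prove, is the lemma: if $F$ is homogeneous over an algebraically closed field and is not a proper power $H^\ell$ (which holds for $V$ because it is squarefree), then $F-c$ is absolutely irreducible for every $c\ne 0$. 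This is a real theorem, provable for instance by viewing the affine hypersurface $F=c$ as a degree-$\deg F$ cyclic cover of the complement of $\{F=0\}$ in projective space and applying the Vahlen--Capelli criterion for binomials $T^m-a$ over the function field; note that the naive comparison of top homogeneous parts (the paper's trick for $d \neq 0$) cannot do this job here, precisely because $V$ itself is reducible.

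Two further points. First, the lemma covers $d=0$, which you exclude at the outset; your method does extend to it---up to scalars the only $S_n$-stable divisors of $V^2$ are $c$, $cV$ and $cV^2$, and $cV$ is not symmetric in odd characteristic since odd permutations send it to $-cV$---but as written the proposal proves less than the statement (in the paper the case $d=0$ is not a side case but the foundation for $d\neq0$; in your approach it is a separate, easy, case that still needs to be recorded). Second, the role of odd characteristic is misidentified: square roots exist in $\ov\F_q$ in characteristic $2$ as well, because Frobenius is surjective. What actually fails in characteristic $2$ is that $V$ becomes a symmetric polynomial (as $-V=V$), so $\discr f - d$ pulls back to the perfect square $(V-\sqrt d)^2$ of a symmetric polynomial and the lemma itself is false. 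The place where $p$ odd genuinely enters your argument is where you need $\sigma(V-\sqrt d)=-V-\sqrt d$ to fail to be a scalar multiple of $V-\sqrt d$ for odd $\sigma$, i.e.\ where you need $2V\neq 0$ and $2\sqrt d\neq 0$.
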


\begin{proof}  
The hypersurface  $\cD_{d,n}$ is defined by the zero set of the polynomial
\[
F_d(A_0,\ldots,A_{n-1})=\discr\(X^n+A_{n-1}X^{n-1}+\cdots + A_0\) -d.
\]

We consider first $d=0$. Considering, if necessary, an extension of $\F_q$, we can 
assume that $q$ is large enough with respect to $n$. 

We suppose  that  we have the factorisation 
\begin{equation}
\label{eq:disc 0 fact}
F_0=\prod_{i=1}^s F_i^{\nu_i} 
\end{equation}
into $s$ distinct absolutely irreducible polynomials $F_i\in \F_q[A_0,\ldots,A_{n-1}]$ with
some multiplicities $\nu_1, \ldots, \nu_s \ge 1$. This factorisation together with the celebrated result 
of Lang and Weil~\cite{LaWe} implies that 
\begin{equation}
\label{eq:Count 1}
\# (\cD_{0,n}\cap \F_q[X]) = s q^{n-1} + O\(n^2q^{n^2-3/2}\)
\end{equation} 
with an absolute implied constant. 
On the other hand, by the classical result of Carlitz~\cite[Section~6]{Carl}, for any $n \ge 2$, we have 
\begin{equation}
\label{eq:Count 2}
\# \(\cD_{0,n}\cap \F_q[X]\)=q^{n-1}.
\end{equation} 
Comparing~\eqref{eq:Count 1} and~\eqref{eq:Count 2},  we see that $s=1$ in the factorisation~\eqref{eq:disc 0 fact}, and thus $F_0=F_1^{\nu_1}$. 

We now want to show that we must have $\nu_1=1$. For this it is enough to consider a specialisation of the coefficients in $F_0$. Indeed, let
\[
F_0(B,B_3,\ldots,B_n)=\discr\((X^2-B)\prod_{i=3}^n(X-B_i)\).
\]
Then simple computation shows that
\[
F_0(B,B_3,\ldots,B_n)=4B\prod_{3\le i<j\le n}(B_i-B_j)^2 \prod_{i=3}^n(B_i^2-B),
\] 
which cannot be a power of any polynomial (we recall $p$ is odd, and thus the constant factor of $F_0(B,B_3,\ldots,B_n)$ is nonzero). Therefore $F_0$ is absolutely irreducible.

We now consider $d\ne 0$.  

We define the {\it weighted degree\/} of $A_{i}$ as $\wdeg A_i =n- i$, and then extend 
this definition in a natural way  to first monomials in $A_0, \ldots, A_{n-1}$ and then to 
arbitrary polynomials, preserving its additivity
\[
\wdeg GH = \wdeg G + \wdeg H
\]
for any polynomials $G,H \in  \F_q[A_0,\ldots,A_{n-1}]$. 

From the explicit formula for the discriminant of a monic polynomial via its roots, we see that 
the discriminant is a symmetric homogeneous polynomial
of the roots $\alpha_1, \ldots, \alpha_n$  of degree $n(n-1)$. 
On the other hand, by our definition of the weighted degree, 
the elementary symmetric polynomials  $\sigma_i(\alpha_1, \ldots, \alpha_n)$ 
are of weighted degree  $i$ and also of degree $i$ as polynomials  
in $\alpha_1, \ldots, \alpha_n$.
Therefore, 
\begin{equation}
\label{eq:wdeg discr}
\wdeg \discr\(X^n+A_{n-1}X^{n-1}+\cdots + A_1 X+ A_0\) = n(n-1).
\end{equation}

Assume that 
\[
F_{d}=F_0-d=GH,
\]
where $G,H\in\F_q[A_0,\ldots,A_{n-1}]$ are of positive degrees and hence of 
positive weighted degrees $k$ and $\ell$, respectively. 
We see from~\eqref{eq:wdeg discr} that $k+\ell =  n(n-1)$ and moreover, 
\[
F_0=G^*H^*,
\]   
where $G^*$ is the top  homogeneous with respect to the weighted degree  part of $G$ and $H^*$ is similarly defined. Since 
\[
\wdeg G^* = \wdeg G\ge \deg G/n > 0
\]
and similarly for $H^*$, we conclude that $G^*$  and $H^*$ are not constant polynomials.
However, this contradicts the absolute irreducibility of $F_0$ proved above, which completes the proof.
\end{proof}

In particular, Lemma~\ref{lem:discr surf} implies that  $\cD_{d,n}$ is of dimension $\dim \cD_{d,n} = n-1$.

\subsection{Counting polynomials with non-relatively prime derivatives and 
given discriminant}

For $0 \le i< j < n$, let $\cR_{i,j,n}$ be the hypersurface of polynomials $f$  over $\overline \F_q$  
of degree $n$  defined by the equation $\Res{f^{(i)}, f^{(j)}}= 0$, where $\Res{f^{(i)}, f^{(j)}}$ is the resultant of the $i$th 
and $j$th derivatives of $f$. 

\begin{lemma}\label{lem:res surf}
Let $p>n$. For any  $0 \le i< j < n$  the hypersurface  $\cR_{i,j,n}$ is of dimension 
$\dim \cR_{i,j,n} = n-1$. 
\end{lemma}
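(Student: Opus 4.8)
The plan is to show that $\cR_{i,j,n}$ is a proper nonempty hypersurface in the affine space $\overline{\F}_q^n$ of monic degree-$n$ polynomials, which forces its dimension to be exactly $n-1$. The key point is that $\cR_{i,j,n}$ is cut out by a single equation $\Res{f^{(i)}, f^{(j)}} = 0$, so a priori it is a hypersurface of dimension $n-1$ \emph{provided} the defining resultant is neither identically zero as a polynomial in the coefficients $A_0, \ldots, A_{n-1}$ nor a nonzero constant. Since a resultant that is identically zero would force every pair of derivatives to share a root, and a nonzero-constant resultant would make the variety empty, the whole argument reduces to exhibiting, on the one hand, a single polynomial $f$ for which $f^{(i)}$ and $f^{(j)}$ \emph{do} have a common root (so $\cR_{i,j,n} \ne \emptyset$) and, on the other hand, a single polynomial for which they are relatively prime (so $\cR_{i,j,n}$ is not all of space).

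First I would record that, because $p > n$, the derivatives $f^{(i)}$ and $f^{(j)}$ are genuinely polynomials of degrees $n-i$ and $n-j$ respectively, with nonvanishing leading coefficients (the relevant falling-factorial constants $n!/(n-i)!$ etc.\ are units in characteristic $p > n$). This is exactly where the hypothesis $p > n$ is used, and it is the only delicate structural issue: in small characteristic the higher derivatives could degenerate or vanish, and the degree of the resultant as a polynomial in the $A_k$ could collapse. With the degrees under control, $\Res{f^{(i)}, f^{(j)}}$ is a well-defined polynomial in the coefficients $A_0, \ldots, A_{n-1}$, and I would note it is not the zero polynomial: for a generic (hence separable, with distinct roots of all derivatives) choice it is nonzero, or more concretely one can take a specific $f$, for instance a suitable product of linear factors or a power $X^n$ shifted appropriately, whose $i$th and $j$th derivatives share no root.

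Next I would produce an explicit witness lying \emph{on} the hypersurface to guarantee $\cR_{i,j,n}$ is nonempty. The cleanest choice is $f(X) = X^n$: then $f^{(i)}(X) = \tfrac{n!}{(n-i)!} X^{n-i}$ and $f^{(j)}(X) = \tfrac{n!}{(n-j)!} X^{n-j}$ both vanish at $X = 0$ (since $i < j < n$ means both $n-i$ and $n-j$ are positive), so they share the root $0$ and their resultant vanishes. Hence the coefficient vector of $X^n$ lies in $\cR_{i,j,n}$, showing the variety is nonempty and that the defining polynomial is not a nonzero constant. Combining the two observations, $\cR_{i,j,n}$ is the zero set of a nonzero nonconstant polynomial in $n$ variables, and every irreducible component of such a set has codimension exactly $1$; therefore $\dim \cR_{i,j,n} = n-1$, as claimed.

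The main obstacle, and the only place requiring genuine care, is verifying that the resultant is not identically zero, that is, exhibiting a polynomial with relatively prime $i$th and $j$th derivatives. For this I expect to argue that a generic monic polynomial of degree $n$ has all its derivatives pairwise coprime, which follows from the fact that in characteristic $p>n$ the derivative operators preserve separability sufficiently and the condition ``$f^{(i)}, f^{(j)}$ share a root'' is a nontrivial closed condition; concretely one can check it on a single well-chosen example (e.g.\ a polynomial whose roots and critical points are all distinct, such as a Chebyshev-like or random small-support polynomial) to confirm that the resultant does not vanish there. Once both the emptiness and the everything obstructions are ruled out by these two explicit specialisations, the dimension statement is immediate from the standard principal-ideal/Krull height theorem.
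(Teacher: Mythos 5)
Your framework coincides with the paper's reduction: $\cR_{i,j,n}$ is the zero set of the single polynomial $\Res{f^{(i)},f^{(j)}}\in\F_q[A_0,\ldots,A_{n-1}]$, and once this polynomial is shown to be nonconstant, its zero set over $\overline\F_q$ is nonempty of pure codimension one, giving $\dim\cR_{i,j,n}=n-1$; your nonemptiness witness $f=X^n$ is also correct. The genuine gap is in the other half, which you yourself flag as ``the only place requiring genuine care'': you never establish that the resultant is not identically zero. The genericity argument you sketch is circular --- asserting that ``$f^{(i)}$ and $f^{(j)}$ share a root'' is a \emph{nontrivial} closed condition is precisely the claim that some specialisation has coprime $i$th and $j$th derivatives, which is the statement to be proved. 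Moreover, none of your candidate witnesses is verified, and some fail: a ``power $X^n$ shifted appropriately,'' read as $(X-a)^n$, has all of its derivatives vanishing at $a$; $X^n+c$ works only for $i=0$, since for $i\ge 1$ both $f^{(i)}$ and $f^{(j)}$ vanish at $0$; and separability of $f$ by itself says nothing about coprimality of its higher derivatives.

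The paper closes exactly this gap with one specialisation that does both jobs at once: $f(X)=X^n+AX^i$, viewed as a polynomial in the single parameter $A$. Since $j>i$, the $j$th derivative annihilates the term $AX^i$, so $f^{(j)}(X)=n(n-1)\cdots(n-j+1)X^{n-j}$ has $0$ as its only root, while $f^{(i)}(0)=i!\,A$. Hence
\[
\Res{f^{(i)},f^{(j)}}=\pm\(n(n-1)\cdots(n-j+1)\)^{n-i}(i!)^{n-j}A^{n-j},
\]
and because $p>n$ the constant factors are nonzero, this is a nonconstant polynomial in $A$ (of degree $n-j\ge 1$). Taking $A\ne 0$ supplies the coprime witness missing from your argument, while $A=0$ recovers your point $X^n$ on the hypersurface; consequently the defining resultant is nonconstant, and the dimension statement follows by the principal ideal theorem exactly as you indicated. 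With this (or any comparably explicit) verification inserted, your proof is complete and is essentially the paper's.
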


\begin{proof}  Let $f(X)=X^n+A_{n-1}X^{n-1}+\cdots + A_0\in \F_q[A_0,\ldots,A_{n-1}]$. It is enough to show that for any  $0 \le i< j < n$, $\Res{f^{(i)}, f^{(j)}}$ as a polynomial in $ \F_q[A_0,\ldots,A_{n-1}]$ is not constant. Assume this is not the case, that is, $\Res{f^{(i)}, f^{(j)}}\in\F_q$ for some $0 \le i< j < n$. We can easily  obtain a contradiction by considering a specialisation of $A_0,\ldots,A_{n-1}$. Indeed, let $f(X)=X^n+AX^i \in \F_q[A]$. Then simple computation shows that
\[
\Res{f^{(i)}, f^{(j)}}=\pm \(n(n-1)\cdots (n-j+1)\)^{n-i}(i!)^{n-j}A^{n-j},
\]
which is a nonconstant polynomial (since $p>n$), contradicting our assumption above.
\end{proof}

Next, we consider intersections of the hypersurfaces  $\cD_{d,n}$ and  $\cR_{i,j,n}$. 

\begin{cor}\label{cor:discr res var}
Let $p>n$. For any $d \in \F_q$ and  $0 \le i< j < n$  the variety $\cD_{d,n} \cap \cR_{i,j,n}$ is of dimension 
at most $n-2$. 
\end{cor}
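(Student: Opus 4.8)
The plan is to regard $\cD_{d,n}$ and $\cR_{i,j,n}$ as hypersurfaces in the affine coefficient space $\overline\F_q^{\,n}$ with coordinates $A_0,\ldots,A_{n-1}$, and to reduce the dimension bound to a non-containment statement. Since $p>n\ge 2$ forces $p$ to be odd, Lemma~\ref{lem:discr surf} applies and tells us that $\cD_{d,n}$ is absolutely irreducible of dimension $n-1$, while Lemma~\ref{lem:res surf} gives $\dim\cR_{i,j,n}=n-1$. Now $\cD_{d,n}\cap\cR_{i,j,n}$ is a closed subvariety of the irreducible variety $\cD_{d,n}$, so it either equals $\cD_{d,n}$, which happens exactly when $\cD_{d,n}\subseteq\cR_{i,j,n}$, or it is a proper closed subset and hence has dimension at most $\dim\cD_{d,n}-1=n-2$. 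The whole statement therefore reduces to proving the non-containment $\cD_{d,n}\not\subseteq\cR_{i,j,n}$, and for this it suffices to exhibit one monic $f$ of degree $n$ with $\discr f=d$ and $\Res{f^{(i)},f^{(j)}}\ne 0$.

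To build such a witness I would first find an auxiliary monic $g$ of degree $n$ that is separable and has coprime derivatives $g^{(i)},g^{(j)}$. The locus where $\discr g=0$ is the hypersurface $\cD_{0,n}$, and the locus where $\Res{g^{(i)},g^{(j)}}=0$ is $\cR_{i,j,n}$; both have dimension $n-1$ by the lemmas cited above, so their union is a proper closed subset of $\overline\F_q^{\,n}$, and its complement, being a nonempty open subset of affine space over the algebraically closed field $\overline\F_q$, contains a point $g$. By construction $\discr g\ne 0$ and $\Res{g^{(i)},g^{(j)}}\ne 0$.

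It remains to adjust the discriminant to the prescribed value $d$ without destroying the coprimality, and here I would exploit the homogeneity of the discriminant under scaling of the roots. For $\lambda\in\overline\F_q^{\,*}$ set $g_\lambda(X)=\lambda^n g(X/\lambda)$, so that $g_\lambda$ is monic of degree $n$ with roots $\lambda$ times those of $g$; then $\discr g_\lambda=\lambda^{n(n-1)}\discr g$, whereas $g_\lambda^{(i)}(X)=\lambda^{\,n-i}g^{(i)}(X/\lambda)$, which shows that $g_\lambda^{(i)}$ and $g_\lambda^{(j)}$ share a root if and only if $g^{(i)}$ and $g^{(j)}$ do, so $\Res{g_\lambda^{(i)},g_\lambda^{(j)}}\ne 0$. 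For $d\ne 0$ the element $d/\discr g$ is a nonzero scalar, so by algebraic closedness I may choose $\lambda\ne 0$ with $\lambda^{n(n-1)}=d/\discr g$; then $f=g_\lambda$ satisfies $\discr f=d$ and $\Res{f^{(i)},f^{(j)}}\ne 0$, giving $\cD_{d,n}\not\subseteq\cR_{i,j,n}$ and the desired bound $\dim\(\cD_{d,n}\cap\cR_{i,j,n}\)\le n-2$.

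The main obstacle is producing a witness with the exact prescribed discriminant: demanding only separability and coprime derivatives is a routine dimension count, but matching a specific value $d$ is what forces the scaling device and the reduction to a single auxiliary $g$. The one genuinely delicate point is the degenerate value $d=0$, which is why I work with $d\ne 0$ — the case relevant for Theorem~\ref{thm:IdhG}, since irreducible polynomials of degree $n\ge 2$ over $\F_q$ are separable and hence have nonzero discriminant. Indeed the argument cannot cover $d=0$ for the pair $(i,j)=(0,1)$, because $\Res{f,f'}$ equals $\discr f$ up to sign and so $\cR_{0,1,n}=\cD_{0,n}$, forcing $\cD_{0,n}\cap\cR_{0,1,n}=\cD_{0,n}$ of dimension $n-1$.
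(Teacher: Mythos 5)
Your argument is correct for every $d\in\F_q^*$, and it reaches the conclusion by a genuinely different route than the paper. The two proofs share the first reduction: by Lemma~\ref{lem:discr surf} (applicable since $p>n\ge 2$ forces $p$ odd) the hypersurface $\cD_{d,n}$ is absolutely irreducible, so $\cD_{d,n}\cap\cR_{i,j,n}$ is either all of $\cD_{d,n}$ or of dimension at most $n-2$, and everything hinges on non-containment. The paper settles non-containment algebraically: the defining polynomial of $\cD_{d,n}$ is irreducible and is claimed to have degree strictly larger than that of $\cR_{i,j,n}$, so the two defining polynomials are coprime and the intersection is proper. You settle it geometrically, by exhibiting a point of $\cD_{d,n}$ outside $\cR_{i,j,n}$: a $g$ avoiding the proper closed set $\cD_{0,n}\cup\cR_{i,j,n}$, rescaled to $g_\lambda(X)=\lambda^n g(X/\lambda)$ with $\lambda^{n(n-1)}=d/\discr g$; your identity $g_\lambda^{(i)}(X)=\lambda^{n-i}g^{(i)}(X/\lambda)$ correctly shows the rescaling moves the discriminant to $d$ while preserving coprimality of the $i$th and $j$th derivatives. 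Your route needs no degree bookkeeping and treats all pairs $(i,j)$, including $(0,1)$, uniformly; its cost is that the scaling device genuinely requires $d\ne 0$, whereas the paper's coprimality argument, where its degree claim is valid, is insensitive to $d$.

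Your closing observation is in fact a catch of a real defect in the paper, not a weakness of your proof: for monic $f$ one has $\Res{f,f'}=\pm\discr f$, so $\cR_{0,1,n}=\cD_{0,n}$, and hence for $d=0$ and $(i,j)=(0,1)$ the intersection equals $\cD_{0,n}$, of dimension $n-1$. Thus the corollary as stated (``for any $d\in\F_q$'') is false, and correspondingly the paper's inequality $\deg\cD_{d,n}>\deg\cR_{i,j,n}$ fails for the pair $(0,1)$: there the two defining polynomials coincide up to sign and the additive constant $-d$, so their degrees are equal. None of this affects Theorem~\ref{thm:IdhG}, where only $d\in\F_q^*$ occurs. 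The single case you leave open, $d=0$ with $(i,j)\ne(0,1)$, is true but needs a separate observation your scaling trick cannot supply, for instance: $F_0$ is weighted-homogeneous of weighted degree $n(n-1)$, while $\Res{f^{(i)},f^{(j)}}$ is weighted-homogeneous of weighted degree $(n-i)(n-j)\le n(n-2)<n(n-1)$, so the irreducible $F_0$ cannot divide it. Since that case is never used, the omission is harmless, but if your proof were to replace the paper's, the statement should be corrected to $d\in\F_q^*$ or supplemented as above.
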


\begin{proof} 
 Since by Lemma~\ref{lem:res surf}  the polynomial defining $\cR_{i,j,n}$ is not identical to zero, 
 we see that  by Lemma~\ref{lem:discr surf} we have  $\dim \(\cD_{d,n} \cap  \cR_{i,j,n}\) < \dim \cD_{d,n}  = n-1$. Indeed, this follows from the absolute irreducibility of $\cD_{d,n}$ and the fact that $\deg \cD_{d,n}>\deg \cR_{i,j,n}$, and thus $\cD_{d,n}$ 
 and $\cR_{i,j,n}$ are relatively prime polynomials.
\end{proof}  

\section{Proof of Theorem~\ref{thm:IdhG}} 

\subsection{Splitting polynomials into  equivalence classes}

We say that two polynomials $f,g \in \F_q[X]$ are equivalent if for some $u \in \F_q$ we have 
$f(X) = g(X+u)$. 

We now show that for any nonzero  polynomial  $f \in \F_q[X]$, its equivalence class contains exactly 
$q$ polynomials. 
Indeed, since $f(X) = f(X+u)$ implies $f(X) = f(X+ju)$, $j = 0, \ldots, n$, we see that if $\alpha\in \overline \F_q$ 
is a root of $f$ then so are $\alpha - ju$, $j = 0, \ldots, n$, which are pairwise distinct for $p > n$, which 
 is impossible.

We also observe that the above equivalence relation preserves the 
irreducibility property and also the discriminant.

For   $f \in \cI_{n,d}$, let $\cJ_f$ be the equivalence class of $f$. Thus, 
we now split the set $\cI_{n,d}$   into $q^{-1} \#\cI_{n,d}$  equivalence classes  $\cJ_f$, 
which do not depend on the choice of the representative $f$.

We say that  $\cJ_f$ is {\it good\/} if for all integers $0 \le i < j \le n$ we have $\Res{f^{(i)}, f^{(j)}}\ne 0$.
Otherwise we say that  $\cJ_f$ is {\it bad\/}.

Denoting by $\cJ_f(\vh,\vG) = \cJ_f \cap  \cI_{d,n} (\vh, \vG)$, we conclude that 
\begin{equation}
\label{eq: Idn via Jf}
 \cI_{d,n} (\vh, \vG) = \frac{1}{q} \sum_{f \in \cI_{d,n}} \# \cJ_f(\vh,\vG). 
 \end{equation}

Next, we obtain an asymptotic 
formula for $\#\cJ_f(\vh,\vG)$ for good equivalence classes and then estimate the 
cardinality of the union of bad equivalence classes.

\subsection{Estimating the contribution from good and bad equivalence classes}

We denote $f_u(X) = f(X+u)$ and recall the Taylor formula
\[
f_u(X) = \sum_{i = 0}^n \frac{f^{(i)}(u)}{i!} X^i.
\]

Let $\cX_t$ denote the set of multiplicative characters of $\F_q^*$ of order dividing $t$, 
that is, with $\chi^t = \chi_0$, where $\chi_0$ is the principal character. 
It is also convenient to denote $\cX_t^* = \cX_t \setminus\{\chi_0\}$. 

We also denote $t_i = (q-1)/\#\cG_i$, $i = 0, \ldots, n-1$.

Using the orthogonality relation for characters (see, for example,~\cite[Chapter~3]{IwKow}), 
we see that for $f \in \cI_{d,n}$ we have 
\begin{equation}
\label{eq:char func}
\prod_{i=0}^{n-1} \frac{1}{t_i} \sum_{\psi_i \in \cX_i} \psi_i\(h_i^{-1} f^{(i)}(u)/i!\) = 
\begin{cases} 1, & \text{if } f_u \in  \cI_{d,n} (\vh, \vG), \\
0, & \text{otherwise}.
\end{cases} 
\end{equation}

Therefore, summing~\eqref{eq:char func} over all $u \in \F_q$ and separating the contributions
from the principal characters $\psi_i = \chi_0$, we see that for 
$f \in \cI_{d,n}$ we have
\begin{equation}
\label{eq:Jf and R}
\begin{split}
 \# \cJ_f(\vh,\vG)& =  \frac{1}{t_0\ldots t_{n-1}} \sum_{u \in \F_q} \prod_{i=0}^{n-1}  \(1 + 
 \sum_{\psi_i \in \cX_i^*} \psi_i\(h_i^{-1} f^{(i)}(u)/i!\)\) \\
& =  \frac{1}{t_0\ldots t_{n-1}}\(q  + R\),
\end{split}
\end{equation}
where 
\[
R=  \sum_{\substack{\cK \subseteq \{0, \ldots, n-1\}\\ \cK \ne\varnothing}}
 \sum_{\substack{\psi_i \in \cX_i^*\\ i \in \cK}}
   \sum_{u \in \F_q} \prod_{i\in \cK}   \psi_i\(h_i^{-1} f^{(i)}(u)/i!\).
\]

Clearly for a good  equivalence class, the polynomials $ f^{(i)}(X)$, $i = 0, \ldots, n-1$,  have no multiple or common roots. 
Hence, the Weil bound (see~\cite[Theorem~11.23]{IwKow}) applies to each of $t_0\ldots t_{n-1}-1$ sums over $u$ and yields 
\[
\left| \sum_{u \in \F_q} \prod_{i \in \cK}    \psi_i\(h_i^{-1} f^{(i)}(u)/i!\) \right| \le \frac{n(n-1)}{2} q^{1/2}. 
\]
Therefore 
\[
|R| \le \frac{n(n-1)}{2}   \(t_0\ldots t_{n-1}-1\)  q^{1/2},
\]
and substituting this in~\eqref{eq:Jf and R}, we see that 
\[
\left| \# \cJ_f(\vh,\vG) -   \frac{q}{t_0\ldots t_{n-1}}\right| <  \frac{n(n-1)}{2} q^{1/2}.
\]

Denoting by $B$ the number of bad  equivalence classes, we see that the contribution $\fG$ to~\eqref{eq: Idn via Jf}from good  equivalence classes
satisfies
\[
\left |\fG -\( \# \cI_{d,n}/q - B\)  \frac{q}{t_0\ldots t_{n-1}}  \right| < \( \# \cI_{d,n}/q - B\)  \frac{n(n-1)}{2} q^{1/2}, 
\]
which we write in a slightly weaker form
\begin{equation}
\label{eq:Good}
\left |\fG -\frac{1}{t_0\ldots t_{n-1}} \# \cI_{d,n}  \right| < \frac{n(n-1)}{2}  \# \cI_{d,n}  q^{-1/2}.
\end{equation}
Using the trivial bound $ \# \cJ_f(\vh,\vG) \le q$, we see that the contribution $\fB$ from bad  equivalence classes
satisfies
\begin{equation}
\label{eq:Bad}
\fB \le qB. 
\end{equation}

Using~\eqref{eq: Idn via Jf}, we now write 
\begin{align*}
 \left|\#  \cI_{d,n} (\vh, \vG)-    \frac{1}{t_0\ldots t_{n-1}} \# \cI_{d,n}  \right| 
&  = \left| \fG + \fB-    \frac{1}{t_0\ldots t_{n-1}} \# \cI_{d,n}  \right| \\
&  \le \left| \fG  -    \frac{1}{t_0\ldots t_{n-1}} \# \cI_{d,n}  \right| +  \fB . 
\end{align*}

Therefore, combining~\eqref{eq:Good} and~\eqref{eq:Bad}, we derive  
\begin{equation}
\label{eq: I and B}
 \left|\#  \cI_{d,n} (\vh, \vG)-  \frac{1}{t_0\ldots t_{n-1}} \# \cI_{d,n}   \right| \le \frac{n(n-1)}{2}  \# \cI_{d,n}  q^{-1/2} + qB.
\end{equation}

\subsection{Estimating the number of  bad equivalence classes and concluding the proof}

We see from Lemma~\ref{lem: Deg Intersect}  that for each pair $(i,j)$ with $0\le i < j \le n-1$,
the variety  $\cD_{d,n} \cap \cR_{i,j,n}$ is of degree at most 
\[
\deg\(\cD_{d,n} \cap \cR_{i,j,n}\) \le \deg\cD_{d,n}\cdot  \deg \cR_{i,j,n}
\le n (n-i) (n-j).
\] 
Therefore,  we see from  Lemma~\ref{lem: Gen S-Z_Lem} and Corollary~\ref{cor:discr res var} 
that there are at most $n(n-i)(n-j)q^{n-2}$ elements in  $\cD_{d,n} \cap \cR_{i,j,n}\cap \F_q[X]$. 
Summing over $0\le i < j \le n-1$ and recalling that each equivalence class contains $q$ polynomials, 
we derive
\begin{align*}
B &  \le  n q^{n-3}  \sum_{0\le i < j \le n-1}   (n-i) (n-j) \\
& =\frac{1}{2}  n q^{n-3}  \sum_{0\le i \ne j \le n-1}   (n-i) (n-j) \\
& = \frac{1}{2} n q^{n-3} \( \(\sum_{0\le i \le n-1}(n-i)  \)^2 -  \sum_{0\le i \le n-1}   (n-i)^2\)\\
& = \frac{1}{2} n q^{n-3} \( \frac{n^2(n+1)^2}{4}  - \frac{n(n+1)(2n+1)}{6} \)\\
& =   \frac{n^2(n^2-1)(3n+2)}{24} q^{n-3},
\end{align*}
which, after recalling~\eqref{eq: I and B}, implies the  result. 

\begin{remark}
In particular, if $n=2$ and 
\[
q >  \frac{2^2(2^2-1)(3\cdot 2+2)}{24} = 4
\] 
we get $B = 0$.
\end{remark}


\appendix 
\section{Counting irreducible polynomials with a given discriminant}
\label{app:Irr Poly Discr}

First we record the following (perhaps well-known) 
statement. 

\begin{lemma}
\label{lem: Abd Irred} Let $L_i(Z_1, \ldots, Z_m) \in \K[Z_1, \ldots, Z_m]$, $1\le i\le k$, be 
$k$ pairwise non-proprtional linear forms in $m$ variables over a field $\K$. Then for any $t \in \K^*$ 
the polynomial
\[
\prod_{i=1}^k L_i(Z_1, \ldots, Z_m) - t
\] 
is irreducible. 
\end{lemma} 

\begin{proof}  To see this, we consider a homogeneous polynomial 
\[
F(Z_1, \ldots, Z_m; T) = \prod_{i=1}^kL_i(Z_1, \ldots, Z_m) - tT^k,
\] which we treat 
as a polynomial in $T$ over the ring $\cR = \K[Z_1, \ldots, Z_m]$. By the Eisenstein criterion, the polynomial $F$ is irreducible
over $\cR$. 
On the other hand, it is easy to see that any factorisation of 
\[
\prod_{i=1}^kL_i(Z_1, \ldots, Z_m) - t = g(Z_1, \ldots, Z_m)  h(Z_1, \ldots, Z_m) 
\]
will induce a factorisation  $F = GH$ with 
\[
G= T^{\deg g}   g(Z_1/T, \ldots, Z_m/T) \quad \text{and}\qquad H= T^{\deg h}   h(Z_1/T, \ldots, Z_m/T).
\]
It remains to notice that since  $t \in \K^*$ both $G$ and $H$ are of positive degree with respect to $T$, which contradicts the already established irreducibility of $F$ over $\cR$. 
\end{proof}  

We now obtain an asymptotic formula for $\#\cI_{d,n}$. 

 \begin{theorem}
\label{thm:Irr Discr} Let $\F_q$ be of characteristic $p >  2$. 
For $d \in  \F_q^*$,  we have 
\[
\#\cI_{d,n}  =   \frac{2}{n} q^{n-1}  + O\(n^2 q^{n-3/2}\)
\]
if $\chi_2(d) = (-1)^{n-1}$, 
where $\chi_2$ is the quadratic character of $\F_q$,  and $\#\cI_{d,n} = 0$, otherwise. 
\end{theorem}

\begin{proof} It is easy to see that $\#\cI_{d,n}  = n^{-1} \# \cA_{d,n}$, where $\cA_{d,n}$ is the set of $\alpha \in \F_{q^n}$ 
which do not belong to any proper subfield of $\F_{q^n}$  and such that $d$ is the 
discriminant of the minimal polynomial $f_\alpha$ of $\alpha$.

Let $\sigma$ be the Frobenius automorphism, that is, $\sigma(\alpha) = \alpha^q$ for $\alpha \in \F_{q^n}$. 
Since the roots of $f_\alpha$ are 
\[
\sigma^0(\alpha) = \alpha, \sigma(\alpha) = \alpha^q , \ldots, \sigma^{n-1}(\alpha) = \alpha^{q^{n-1}},
\]
we see that 
\[
\discr f_\alpha = \prod_{0 \le i < j \le n-1} \(\sigma^i(\alpha) - \sigma^j(\alpha)\)^2.
\]

We now fix a basis $\omega_1, \ldots, \omega_n$ of $\F_{q^n}$ over $\F_q$ and consider the polynomial
\[
V(X_1, \ldots, X_n) =  \prod_{0 \le i < j \le n-1} \sum_{k=1}^n\(\sigma^i(\omega_k) - \sigma^j(\omega_k)\) X_k.
\]
Clearly if   $\alpha \in \F_{q^n}$   is written as $\alpha = x_1\omega_1+\ldots + x_n\omega_n$ then 
$\alpha \in \cA_{d,n}$ is equivalent to 
\begin{equation}
\label{eq: V2 = d}
V(x_1, \ldots, x_n)^2 = d. 
\end{equation}

Next we observe that 
\begin{align*}
\sigma (V(X_1, \ldots, X_n)) & =  \prod_{0 \le i < j \le n-1} \sum_{k=1}^n\sigma \(\sigma^i(\omega_k) - \sigma^j(\omega_k)\) X_k\\
& =  \prod_{0 \le i < j \le n-1} \sum_{k=1}^n \(\sigma^{i+1}(\omega_k) - \sigma^{j+1}(\omega_k)\) X_k, 
\end{align*}
and using that $\sigma^{n} = \sigma^0$, we derive 
\[
\sigma (V(X_1, \ldots, X_n))  = (-1)^{n-1} V(X_1, \ldots, X_n). 
\]
We now fix an arbitrary element $\gamma \ne 0$ in the algebraic closure $\overline{\F}_q$ of $\F_q$ with 
$\sigma(\gamma) =  (-1)^{n-1} \gamma$ (note that $\sigma(\gamma^2) =  \sigma(\gamma)^2 = \gamma^2$ thus in fact $\gamma^2\in \F_{q}$). 
Then for the polynomial $W(X_1, \ldots, X_n) = \gamma^{-1} V(X_1, \ldots, X_n)$ we have 
\[
\sigma (W(X_1, \ldots, X_n))  =  W(X_1, \ldots, X_n)
\] 
and thus $W(X_1, \ldots, X_n)$ is defined over $\F_q$.  

Next, we observe that~\eqref{eq: V2 = d} is equivalent to 
\begin{equation}
\label{eq: W2 = d}
W(x_1, \ldots, x_n)^2 = \gamma^{-2}d. 
\end{equation}
From the definition of $\gamma$ and the Euler criterion, we derive that 
\[
\chi_2(\gamma^2) = \(\gamma^2\)^{(q-1)/2} = \gamma^{q-1} = (-1)^{n-1}.
\]
Clearly, the equation~\eqref{eq: W2 = d}  has no solution in $x_1, \ldots, x_n \in \F_q$ unless 
\[
\chi_2(d) = \chi_2(\gamma^2)  = (-1)^{n-1},
\]
and splits into two equations 
\begin{equation}
\label{eq: W = pm t}
W(x_1, \ldots, x_n) = \pm t
\end{equation}
for some $t \in\F_q^*$. 

Recalling Lemma~\ref{lem: Abd Irred}, and using the asymptotic formula of  Lang and Weil~\cite{LaWe} 
to count solutions to each of the equations~\eqref {eq: W = pm t},  we conclude the proof. 
\end{proof}

\section*{Acknowledgements} 
 
During the preparation of this paper,  the authors were supported by  Australian Research Council Grants DP230100530 and FT250100208. The asymptotic 
formula and outline of the proof of Theorem~\ref{thm:Irr Discr} were provided by ChatGPT Pro~5.6, however, all details of the proof have been checked and written by the authors.

\end{document}